\title[Decomposition of a $\Gamma_n$-contraction]
{Canonical decomposition of operators associated with the
symmetrized polydisc}
\author{Sourav Pal}
\address{Department of Mathematics, Indian Institute of Technology Bombay, Mumbai - 400076, India.}
\email{sourav@math.iitb.ac.in,
souravmaths@gmail.com}
\thanks{The author is supported by Seed Grant of IIT Bombay, CPDA and the INSPIRE Faculty Award
(Award No. DST/INSPIRE/04/2014/001462) of DST, India.}
\keywords{Spectral set, Symmetrized polydisc,
$\Gamma_n$-contraction, Canonical Decomposition}
\subjclass[2010]{47A13, 47A15, 47A20, 47A25, 47A45}
\def\textmatrix#1&#2\\#3&#4\\{\bigl({#1 \atop #3}\ {#2 \atop #4}\bigr)}
\def\dispmatrix#1&#2\\#3&#4\\{\left({#1 \atop #3}\ {#2 \atop #4}\right)}
\newcommand{\beg}{\begin{equation}}
\newcommand{\eeg}{\end{equation}}
\newcommand{\ben}{\begin{eqnarray*}}
\newcommand{\een}{\end{eqnarray*}}
\newtheorem{thm}{Theorem}[section]
\newtheorem{lem}[thm]{Lemma}
\newtheorem{prop}[thm]{Proposition}
\numberwithin{equation}{section}
\theoremstyle{definition}
\newtheorem{defn}[thm]{Definition}
\def\textmatrix#1&#2\\#3&#4\\{\bigl({#1 \atop #3}\ {#2 \atop #4}\bigr)}
\def\dispmatrix#1&#2\\#3&#4\\{\left({#1 \atop #3}\ {#2 \atop #4}\right)}
\begin{document}

\begin{abstract}
A tuple of commuting operators $(S_1,\dots,S_{n-1},P)$ for which
the closed symmetrized polydisc $\Gamma_n$ is a spectral set is
called a $\Gamma_n$-contraction. We show that every
$\Gamma_n$-contraction admits a decomposition into a
$\Gamma_n$-unitary and a completely non-unitary
$\Gamma_n$-contraction. This decomposition is an analogue to the
canonical decomposition of a contraction into a unitary and a
completely non-unitary contraction. We also find new
characterizations for the set $\Gamma_n$ and
$\Gamma_n$-contractions.
\end{abstract}

\maketitle

\section{Introduction}

\noindent The open and closed \textit{symmetrized polydisc} (or,
\textit{symmetrized $n$-disc}) for $n\geq 2$ are the following
sets
\begin{align*}
\mathbb G_n &=\left\{ \left(\sum_{1\leq i\leq n} z_i,\sum_{1\leq i<j\leq n}z_iz_j,\dots,
\prod_{i=1}^n z_i \right): \,|z_i|< 1, i=1,\dots,n \right \}, \\
\Gamma_n & =\left\{ \left(\sum_{1\leq i\leq n} z_i,\sum_{1\leq
i<j\leq n}z_iz_j,\dots, \prod_{i=1}^n z_i \right): \,|z_i|\leq 1,
i=1,\dots,n \right \}.
\end{align*}
The symmetrized polydisc is a polynomially convex domain which has
attracted considerable attention in past two decades because of
its connection with the difficult problem of $\mu$-synthesis that
arises in $H^{\infty}$ approach of robust control (e.g, see
\cite{ALY13}). Apart from its rich function theory and complex
geometry, this domain has been extensively studied by the operator
theorists, \cite{ay-jfa, ay-jot, tirtha-sourav, BSR, sourav,
sourav2, pal-shalit}.\\

In this article, we study a commuting tuple of operators
$(S_1,\dots,S_{n-1},P)$ defined on a Hilbert space $\mathcal H$
for which $\Gamma_n$ is a spectral set (which we define in Section
2). Such an operator tuple is called a
$\Gamma_n$-\textit{contraction}. An appealing and convenient way
of describing an operator $n$-tuple is by an underlying compact
subset of $\mathbb C^n$ that is a spectral set for the tuple. In
1951, von Neumann introduced the notion of spectral set for
operators and this geometric approach towards understanding
operators succeeded when he described all contractions, that is
operators with norm not greater than $1$, as operators having the
closed unit disc $\overline{\mathbb D}$ of the complex plane as a
spectral set, \cite{vN}.\\

One of the landmark discoveries in one variable operator theory is
the canonical decomposition of a contraction which asserts that
every contraction operator admits a unique decomposition into two
orthogonal parts of which one is a unitary and the other is a
completely non-unitary contraction. So in geometric language, for
an operator $T$ acting on a Hilbert space $\mathcal H$ and having
$\overline{\mathbb D}$ as a spectral set, there exist unique
reducing subspaces $\mathcal H_1, \mathcal H_2$ of $T$ such that
$\mathcal H=\mathcal H_1 \oplus \mathcal H_2$, $T|_{\mathcal H_1}$
is a unitary that lives on the unit circle $\mathbb T$ and
$T|_{\mathcal H_2}$ is a completely non-unitary contraction (see
Theorem 3.2 in Ch-I, \cite{nagy} for details). Needless to mention
that a unitary is a normal operator for which the boundary
$\mathbb T$ of $\overline{\mathbb D}$ is a spectral set. Also a
contraction on a Hilbert space is said to be \textit{completely
non-unitary} if there is no reducing subspace on which the
operator acts as a unitary. There are natural analogues of unitary
and completely non-unitary contractions in the literature of
$\Gamma_n$-contraction, \cite{BSR}. A $\Gamma_n$-\textit{unitary}
is a commuting tuple of normal operators $(S_1,\dots,S_{n-1},P)$
for which the distinguished boundary $b\Gamma_n$ of $\Gamma_n$ is
a spectral set, where
\[
b\Gamma_n=\{ (s_1,\dots,s_{n-1},p)\in\Gamma_n\,:\, |p|=1 \}.
\]
A $\Gamma_n$-contraction $(S_1,\dots,S_{n-1},P)$ is said to be
\textit{completely non-unitary} if there is no joint reducing
subspace of $S_1,\dots,S_{n-1},P$ on which $(S_1,\dots,S_{n-1},P)$
acts as a $\Gamma_n$-unitary.\\

Since an operator having $\overline{\mathbb D}$ as a spectral set
admits a canonical decomposition, it is naturally asked whether
one can decompose operators having a particular domain in $\mathbb
C^n$ as a spectral set. In \cite{ay-jot}, Agler and Young answered
this question by showing an explicit orthogonal decomposition of a
$\Gamma_2$-contraction (Theorem 2.8, \cite{ay-jot}). The aim of
this article is to generalize the results in $n$ variables and
find an orthogonal decomposition for a $\Gamma_n$-contraction that
splits a $\Gamma_n$-contraction into two parts of which one is a
$\Gamma_n$-unitary and the other is a completely non-unitary
$\Gamma_n$-contraction. Therefore, the main result of this paper
is the following:
\begin{thm}\label{mainthm}
Let $(S_1,\dots,S_{n-1},P)$ be a $\Gamma_n$-contraction on a
Hilbert space $\mathcal H$. Let $\mathcal H_1$ be the maximal
subspace of $\mathcal H$ which reduces $P$ and on which $P$ is
unitary. Let $\mathcal H_2=\mathcal H\ominus \mathcal H_1$. Then
\begin{enumerate}
\item $\mathcal H_1,\mathcal H_2$ reduce $S_1,\dots, S_{n-1}$,
\item $(S_1|_{\mathcal H_1},\dots,S_{n-1}|_{\mathcal
H_1},P|_{\mathcal H_1})$ is a $\Gamma_n$-unitary, \item
$(S_1|_{\mathcal H_2},\dots,S_{n-1}|_{\mathcal H_2},P|_{\mathcal
H_2})$ is a completely non-unitary $\Gamma_n$-contraction.
\end{enumerate}
The subspaces $\mathcal H_1$ or $\mathcal H_2$ may equal to the
trivial subspace $\{0\}$.
\end{thm}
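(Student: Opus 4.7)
The strategy is to inherit the decomposition from the classical Sz.-Nagy--Foias decomposition of the single contraction $P$ and then verify that it is compatible with the full tuple.

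First I would observe that $P$ is itself a Hilbert space contraction: the coordinate polynomial $(s_1,\dots,s_{n-1},p)\mapsto p$ has supremum $1$ on $\Gamma_n$, so the spectrality hypothesis forces $\|P\|\leq 1$. The classical canonical decomposition of $P$ then furnishes the unique reducing pair $\mathcal{H}_1,\mathcal{H}_2$ with $P|_{\mathcal{H}_1}$ unitary, $P|_{\mathcal{H}_2}$ completely non-unitary, and $\mathcal{H}_1$ maximal with that property. So the subspaces in the statement are precisely the ones already at hand from one-variable theory; what must be proved is that they are also compatible with $S_1,\dots,S_{n-1}$.

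The crux of the argument, and the step I expect to be the main obstacle, is showing that $\mathcal{H}_1$ reduces each $S_i$. Here I would use a structural characterization of $\Gamma_n$-contractions in the spirit of the Agler--Young fundamental operator equation for $\Gamma_2$-contractions, which I assume has been developed earlier in the paper. Such a theorem should produce operators $A_1,\dots,A_{n-1}$ on the defect space $\mathcal{D}_P$ satisfying
\[
S_i - S_{n-i}^*\,P \;=\; D_P\,A_i\,D_P, \qquad i=1,\dots,n-1.
\]
Because $\Gamma_n$ is invariant under the coordinate-wise conjugation $(s_1,\dots,s_{n-1},p)\mapsto(\overline{s_1},\dots,\overline{s_{n-1}},\overline{p})$, the adjoint tuple $(S_1^*,\dots,S_{n-1}^*,P^*)$ is again a $\Gamma_n$-contraction and admits a dual equation $S_i^*-S_{n-i}P^*=D_{P^*}\,B_i\,D_{P^*}$. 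On $\mathcal{H}_1$ both $D_P$ and $D_{P^*}$ vanish, so for $h\in\mathcal{H}_1$ these collapse to the pointwise identities $S_i h = S_{n-i}^*\,Ph$ and $S_i^* h = S_{n-i}\,P^*h$. Combining these with the characterization
\[
\mathcal{H}_1=\{h\in\mathcal{H}:\,\|P^k h\|=\|h\|=\|P^{*k}h\| \text{ for all } k\geq 0\}
\]
and the commutation $S_i P = P S_i$, an induction on $k$ (exploiting that $P^k h\in\mathcal{H}_1$ so that $D_P P^k h = 0$ at each stage) yields $D_P P^k S_i h = 0$ and $D_{P^*} P^{*k} S_i h = 0$ for $h\in\mathcal{H}_1$, whence $S_i\mathcal{H}_1\subseteq\mathcal{H}_1$; the parallel argument with the dual equation gives $S_i^*\mathcal{H}_1\subseteq\mathcal{H}_1$. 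Making these manipulations clean is the delicate part, since $S_i$ need not commute with $P^*$.

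Once $\mathcal{H}_1$ is known to reduce the whole tuple, the remaining clauses are formal. On $\mathcal{H}_1$, the restriction is a commuting tuple with $P|_{\mathcal{H}_1}$ unitary and $S_i|_{\mathcal{H}_1} = (S_{n-i}|_{\mathcal{H}_1})^*\,P|_{\mathcal{H}_1}$; invoking a characterization of $\Gamma_n$-unitaries in terms of these relations (again, presumably established earlier in the paper) identifies the restriction as a $\Gamma_n$-unitary. For the complementary part, suppose there were a nonzero joint reducing subspace $\mathcal{K}\subseteq\mathcal{H}_2$ on which the restricted tuple acted as a $\Gamma_n$-unitary. Then by normality and the defining condition $|p|=1$ of $b\Gamma_n$, the operator $P|_{\mathcal{K}}$ would be unitary, contradicting the maximality of $\mathcal{H}_1$. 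Hence $(S_1|_{\mathcal{H}_2},\dots,S_{n-1}|_{\mathcal{H}_2},P|_{\mathcal{H}_2})$ is a completely non-unitary $\Gamma_n$-contraction, as required.
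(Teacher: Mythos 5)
Your overall skeleton matches the paper's: both take $\mathcal H_1,\mathcal H_2$ from the canonical decomposition of $P$ (and $\|P\|\le 1$ is obtained exactly as you say), both reduce everything to showing that $\mathcal H_1$ reduces each $S_i$, both identify the restriction to $\mathcal H_1$ as a $\Gamma_n$-unitary via the characterization in Theorem 2.7 of this paper (note that part (3) of that theorem also requires the $\Gamma_{n-1}$-contractivity of $\bigl(\frac{n-1}{n}S_1|_{\mathcal H_1},\dots,\frac1n S_{n-1}|_{\mathcal H_1}\bigr)$, a hypothesis you skip; the paper gets it from Lemma 2.5 of \cite{BSR} plus restriction to a reducing subspace), and both handle $\mathcal H_2$ via complete non-unitarity of $P_2$ (in fact item (3) is immediate from the paper's definition of a completely non-unitary $\Gamma_n$-contraction, namely that $P_2$ is c.n.u.; your maximality argument is fine but proves more than is needed). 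However, your central step has a genuine gap. You invoke a fundamental operator equation $S_i-S_{n-i}^*P=D_PA_iD_P$ ``developed earlier in the paper'': no such result appears in this paper. The paper's actual tool is the pencil positivity $\Phi_i(\alpha S_1,\dots,\alpha^{n-1}S_{n-1},\alpha^n P)\ge 0$ for all $\alpha\in\overline{\mathbb D}$ (Proposition 2.6), obtained by applying the spectral-set condition to $f=(n\alpha^np-\alpha^{n-i}s_{n-i})/(n-\alpha^is_i)$. The equations you want do exist in the literature (e.g. \cite{BSR}), but their proof itself rests on essentially this positivity, so your route is not a shortcut and cannot be attributed to the present paper.

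More seriously, even granting those equations, the induction you sketch is circular. By commutativity, $D_PP^kS_ih=D_PS_ig$ with $g=P^kh\in\mathcal H_1$, so your whole induction collapses to the single claim $D_PS_ig=0$ for $g\in\mathcal H_1$; but feeding in your two collapsed identities only yields $D_PS_ig=D_PS_{n-i}^*(Pg)=D_PS_i(P^*Pg)=D_PS_ig$, i.e.\ no information, and the same happens on the $D_{P^*}$ side. What actually closes the argument — and is what the paper does — is a block analysis: write $S_i,S_{n-i}$ as $2\times2$ block matrices with respect to $\mathcal H_1\oplus\mathcal H_2$; suitable choices of $\omega,\beta\in\mathbb T$ in the positivity of $\Phi_i+\Phi_{n-i}$ give $S_{i11}=S_{(n-i)11}^*P_1$ (so the $(1,1)$ block $R$ of the positive matrix vanishes), and then positivity of $\begin{bmatrix}R&X\\X^*&Q\end{bmatrix}$ with $R=0$ forces $X=0$ (Bhatia, Proposition 1.3.2), producing the corner identities $S_{i12}=S_{(n-i)21}^*P_2$, $S_{i21}^*=P_1^*S_{(n-i)12}$, etc.; finally, combining these with the commutation relations shows that the corner blocks intertwine the unitary $P_1$ with the c.n.u.\ $P_2$ in both the forward and adjoint senses, so the closure of their ranges is a reducing subspace of $P_2$ on which $P_2$ is unitary, hence they are zero. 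Some argument of this kind (intertwining plus complete non-unitarity applied to the off-diagonal blocks) is indispensable; without it your proof does not establish item (1), and items (2)--(3) depend on it.
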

This is Theorem \ref{thm:decomp} in this paper. We shall define
operator functions $\Phi_1,\dots,\Phi_{n-1}$ related to a
$\Gamma_n$-contraction in Section 2 which play central role in
this decomposition. We shall see that such decomposition is
possible because $\Phi_1,\dots,\Phi_{n-1}$ are positive
semi-definite. Also, we find few properties of the set $\Gamma_n$
and characterize the $\Gamma_n$-contractions in different ways. We
accumulate these results in Section 2. Also we provide a brief
background material in Section 2.

\section{Background material and preparatory results}

A compact subset $X$ of $\mathbb C^n$ is said to be a
\textit{spectral set} for a commuting $n$-tuple of bounded
operators $\underline{T}=(T_1,\hdots,T_n)$ defined on a Hilbert
space $\mathcal H$ if the Taylor joint spectrum
$\sigma_T(\underline{T})$ of $\underline{T}$ is a subset of $X$
and
\[
\|f(\underline{T})\|\leq \|f\|_{\infty,
X}=\sup\{|f(z_1,\hdots,z_n)|\,:\,(z_1,\hdots,z_n)\in\ X\}\,,
\]
for all rational functions $f$ in $\mathcal R(X)$. Here $\mathcal
R(X)$ denotes the algebra of all rational functions on $X$, that
is, all quotients $p/q$ of holomorphic polynomials $p,q$ in
$n$-variables for which $q$ has no zeros in $X$.\\

For $n\geq 2$, the symmetrization map in $n$-complex variables
$z=(z_1,\dots,z_n)$ is the following proper holomorphic map
\[
\pi_n(z)=(s_1(z),\dots, s_{n-1}(z), p(z))
\]
 where
 \[
s_i(z)= \sum_{1\leq k_1 \leq k_2 \dots \leq k_i \leq n}
z_{k_1}\dots z_{k_i} \;,\; i=1,\dots,n-1 \quad \text{ and } \quad
p(z)=\prod_{i=1}^{n}z_i\,.
 \]
The closed \textit{symmetrized} $n$-\textit{disk} (or simply
closed \textit{symmetrized polydisc}) is the image of the closed
unit $n$-disc $\overline{\mathbb D^n}$ under the symmetrization
map $\pi_n$, that is, $\Gamma_n := \pi_n(\overline{\mathbb D^n})$.
Similarly the open symmetrized polydisc $\mathbb G_n$ is defined
as the image of the open unit polydisc $\mathbb D^n$ under
$\pi_n$. For simplicity we write down explicitly the set
$\Gamma_n$ for $n=2$ and $3$.
\begin{align*}
\Gamma_2 &=\{ (z_1+z_2,z_1z_2): \,|z_i|\leq 1, i=1,2 \} \\
\Gamma_3 & =\{ (z_1+z_2+z_3,z_1z_2+z_2z_3+z_3z_1,z_1z_2z_3):
\,|z_i|\leq 1, i=1,2,3 \}.
\end{align*}
The set $\Gamma_n$ is polynomially convex but not convex (see
\cite{BSR}). We obtain from the literature (see \cite{BSR}) the
fact that the distinguished boundary of the symmetrized polydisc
is the symmetrization of the distinguished boundary of the
$n$-dimensional polydisc, which is $n$-torus $\mathbb T^n$. Hence
the distinguished boundary $b\Gamma_n$ of $\Gamma_n$ is the set
\begin{align*}
b\Gamma_n =\left\{ \left(\sum_{1\leq i\leq n} z_i,\sum_{1\leq
i<j\leq n}z_iz_j,\dots, \prod_{i=1}^n z_i \right): \,|z_i|= 1,
i=1,\dots,n \right \}.
\end{align*}

Operator theory on the symmetrized polydiscs of dimension $2$ and
$n$ have been extensively studied in past two decades
\cite{ay-jfa, ay-jot, tirtha-sourav, BSR, sourav, pal-shalit}.

\begin{defn}
A tuple of commuting operators $(S_1,\dots,S_{n-1},P)$ on a
Hilbert space $\mathcal H$ for which $\Gamma_n$ is a spectral set
is called a $\Gamma_n$-$contraction$.
\end{defn}

It is evident from the definition that if $(S_1,\dots,S_{n-1},P)$
is a $\Gamma_n$-contraction then the $S_i$ have norm not greater
than $n$ and $P$ is a contraction. Unitaries, isometries and
co-isometries are important special classes of contractions. There
are natural analogues of these classes for
$\Gamma_n$-contractions.

\begin{defn}
Let $S_1,\dots,S_{n-1},P$ be commuting operators on a Hilbert
space $\mathcal H$. We say that $(S_1,\dots,S_{n-1},P)$ is
\begin{itemize}
\item [(i)] a $\Gamma_n$-\textit{unitary} if $S_1,\dots,S_{n-1},P$
are normal operators and the Taylor joint spectrum
$\sigma_T(S_1,\dots,S_{n-1},P)$ is contained in $b\Gamma_n$ ;
\item [(ii)] a $\Gamma_n$-\textit{isometry} if there exists a
Hilbert space $\mathcal K$ containing $\mathcal H$ and a
$\Gamma_n$-unitary $(\tilde{S_1},\dots,\tilde{S_{n-1}},\tilde{P})$
on $\mathcal K$ such that $\mathcal H$ is a common invariant
subspace for $\tilde{S_1},\dots,\tilde{S_{n-1}},\tilde{P}$ and
that $S_i=\tilde{S_i}|_{\mathcal H}$ for $i=1,\dots,n-1$ and
$\tilde{P}|_{\mathcal H}=P$; \item [(iii)] a
$\Gamma_n$-\textit{co-isometry} if $(S_1^*,\dots,S_{n-1}^*,P^*)$
is a $\Gamma_n$-isometry; \item [(iv)] a \textit{completely
non-unitary} $\Gamma_n$-\textit{contraction} if $P$ is a
completely non-unitary contraction.
\end{itemize}
\end{defn}

One can easily verify that if $(S_1,\dots,S_{n-1},P)$ is a
completely non-unitary $\Gamma_n$-contraction on a Hilbert space
$\mathcal H$, then there is no non-trivial subspace of $\mathcal
H$ that reduces $S_1,\dots,S_{n-1},P$ and on which
$(S_1,\dots,S_{n-1},P)$ acts as a $\Gamma_n$-unitary.\\

For a $\Gamma_n$-contraction $(S_1,\dots,S_{n-1},P)$, let us
define $n-1$ operator pencils $\Phi_1,\dots,\Phi_{n-1}$ in the
following way. These operator functions will play central role in
the canonical decomposition of $(S_1,\dots,S_{n-1},P)$.
\begin{align}
\Phi_{i}(S_1,\dots, S_{n-1},P) &=
(n-S_i)^*(n-S_i)-(nP-S_{n-i})^*(nP-S_{n-i}) \notag
\\&
=n^2(I-P^*P)+(S_i^*S_i-S_{n-i}^*S_{n-i})-n(S_i-S_{n-i}^*P) \notag \\
& \quad \quad -n(S_i^*-P^*S_{n-i}) \label{eq:1a}.
\end{align}
So in particular when $S_1,\dots,S_{n-1}, P$ are scalars, i.e,
points in $\Gamma_n$, the above operator pencils take the
following form for $i=1,\dots, n-1$:
\begin{equation}
\Phi_{i}(s_1,\dots,s_{n-1},p) =
n^2(1-|p|^2)+(|s_i|^2-|s_{n-i}|^2)-n(s_i-\bar{s}_{n-i}p)-n(\bar{s}_i-\bar{p}s_{n-i}).
\label{eqn:2a}
\end{equation}

\begin{lem}\label{thm:sc1}

Let $(s_1,\dots, s_{n-1},p)\in \mathbb C^n$. Then the following
are equivalent:
\begin{enumerate}
\item $(s_1,\dots, s_{n-1},p)\in\Gamma_n$\,; \item $(\omega
s_1,\dots,\omega^{n-1} s_{n-1}, \omega^n p)\in \Gamma_n$ for all
$\omega\in\mathbb T$ \,;  \item $|p|\leq 1$ and there exists
$(c_1,\dots,c_{n-1})\in \Gamma_{n-1}$ such that

\[
s_i=c_i+\bar{c}_{n-i}p \text{ for } i=1,\dots,n-1.
\]
\end{enumerate}

\end{lem}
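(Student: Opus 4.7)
The plan is to dispatch $(1)\Leftrightarrow(2)$ first and then close the loop by proving $(1)\Rightarrow(3)\Rightarrow(1)$.

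The equivalence $(1)\Leftrightarrow(2)$ is a torus-action argument. If $(s_1,\dots,s_{n-1},p)=\pi_n(z_1,\dots,z_n)$ with each $|z_j|\leq 1$, then the homogeneity $e_i(\omega z_1,\dots,\omega z_n)=\omega^i e_i(z_1,\dots,z_n)$ of the elementary symmetric polynomials, combined with $|\omega z_j|\leq 1$, gives $\pi_n(\omega z_1,\dots,\omega z_n)=(\omega s_1,\omega^2 s_2,\dots,\omega^n p)\in\Gamma_n$ for every $\omega\in\mathbb T$; the converse follows on taking $\omega=1$.

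For $(3)\Rightarrow(1)$, given $|p|\leq 1$ and $(c_1,\dots,c_{n-1})\in\Gamma_{n-1}$ with $s_i=c_i+\bar c_{n-i}p$, I would construct an $n$-tuple $(z_1,\dots,z_n)\in\overline{\mathbb D^n}$ realising $(s_1,\dots,s_{n-1},p)$ as its symmetrization. A natural first try is to pick a representative $c_i=e_i(w_1,\dots,w_{n-1})$ of the given point, set $z_j=w_j$ for $j<n$, and take $z_n=\bar c_{n-1}p$ (so that $|z_n|=|c_{n-1}||p|\leq 1$). Then $e_i(z_1,\dots,z_n)=c_i+z_n c_{i-1}$ coincides with $s_i$ provided the identity $c_{i-1}=\bar c_{n-i}c_{n-1}$ holds for every $i$; when it fails I would instead read off the $z_j$'s as the roots of the target polynomial $q(\lambda)=\lambda^n-s_1\lambda^{n-1}+\cdots+(-1)^n p$, verifying separately that these roots lie in $\overline{\mathbb D}$.

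For $(1)\Rightarrow(3)$, which is the most substantial step, pair the desired $i$-th equation with the conjugate of the $(n-i)$-th to obtain
\begin{equation*}
\begin{pmatrix}1 & p\\ \bar p & 1\end{pmatrix}\begin{pmatrix}c_i\\ \bar c_{n-i}\end{pmatrix}=\begin{pmatrix}s_i\\ \bar s_{n-i}\end{pmatrix}.
\end{equation*}
When $|p|<1$ the determinant $1-|p|^2$ is nonzero and forces $c_i=(s_i-p\bar s_{n-i})/(1-|p|^2)$. The genuine obstacle is then to show that $(c_1,\dots,c_{n-1})\in\Gamma_{n-1}$, i.e.\ to exhibit these $c_i$'s as the elementary symmetric polynomials of some $w_1,\dots,w_{n-1}\in\overline{\mathbb D}$. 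A direct computation handles $n=2$: writing $z_j=r_j e^{i\theta_j}$, one obtains $|c|\leq (r_1+r_2)/(1+r_1r_2)\leq 1$ after maximizing over the phases. For general $n$ I would proceed by induction, or equivalently by showing that a suitable $(n-1)$-st degree polynomial whose coefficients are built from the $c_i$ has all its zeros in $\overline{\mathbb D}$. The boundary case $|p|=1$ forces $|z_j|=1$ for all $j$; there the choice $c_i:=e_i(z_1,\dots,z_{n-1})$ together with the identity $\bar z_j=1/z_j$ verifies $s_i=c_i+\bar c_{n-i}p$ directly.
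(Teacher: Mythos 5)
Your torus-action argument for $(1)\Leftrightarrow(2)$ is correct and is exactly the argument the paper gives. The problem is the equivalence $(1)\Leftrightarrow(3)$, which is the substantial content of the lemma, and there your proposal does not close either direction. In $(1)\Rightarrow(3)$ you correctly solve the $2\times 2$ system to get $c_i=(s_i-p\bar s_{n-i})/(1-|p|^2)$ when $|p|<1$ (and your boundary case $|p|=1$ does work), but the crux --- that $(c_1,\dots,c_{n-1})$ so defined lies in $\Gamma_{n-1}$ --- is only announced: ``proceed by induction, or show a suitable polynomial has all its zeros in $\overline{\mathbb D}$'' is a plan, not an argument, and the $n=2$ phase-maximization computation gives no indication of how an inductive step would run. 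In $(3)\Rightarrow(1)$ your first construction needs the identity $c_{i-1}=\bar c_{n-i}c_{n-1}$, which you concede generally fails, and the fallback --- take the $z_j$ to be the roots of $q(\lambda)=\lambda^n-s_1\lambda^{n-1}+\cdots+(-1)^np$ and ``verify separately that these roots lie in $\overline{\mathbb D}$'' --- is circular: since $\Gamma_n=\pi_n(\overline{\mathbb D^n})$, the assertion that all roots of $q$ lie in $\overline{\mathbb D}$ \emph{is} the assertion $(s_1,\dots,s_{n-1},p)\in\Gamma_n$ that you are trying to prove, so nothing has been reduced.

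Be aware that $(1)\Leftrightarrow(3)$ is a genuine theorem of Costara (Theorem 3.7 of the cited paper \cite{costara1}), and the paper does not reprove it: its proof of this lemma consists of the torus-action argument for $(1)\Leftrightarrow(2)$ plus a citation of Costara for $(1)\Leftrightarrow(3)$. So either quote that result, as the paper does, or supply a complete self-contained proof; the latter requires substantially more than the sketch you give, since both of your remaining steps are precisely the nontrivial content of Costara's theorem.
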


\begin{proof}
(1)$\Leftrightarrow (3)$ has been established in \cite{costara1}
(see Theorem 3.7 in \cite{costara1} for a proof). We prove here
(1)$\Leftrightarrow (2)$.  Let $(s_1,\dots, s_{n-1},p)\in
\Gamma_n$. Then there are points $z_1,\dots,z_n$ in
$\overline{\mathbb D}$ such that
\[
(s_1,\dots, s_{n-1},p)=\pi_n (z_1,\dots,z_n).
\]
Now for any $\omega \in\mathbb T$, consider the points $\omega
z_1, \dots, \omega z_n$ in $\overline{\mathbb D}$. Clearly
\[
(\omega s_1, \dots, \omega^{n-1}s_{n-1}, \omega^n p)=\pi_n (\omega
z_1, \dots, \omega z_n).
\]
Therefore, $(\omega s_1, \dots,
\omega^{n-1}s_{n-1}s_{n-1},\omega^n p ) \in \Gamma_n$. The other
side of the proof is trivial.

\end{proof}

In a similar fashion, we have the following characterizations for
$\Gamma_n$-contractions.

\begin{thm}\label{lem:3}
Let $(S_1,\dots,S_{n-1},P)$ be a tuple of commuting operators
acting on a Hilbert space $\mathcal H$. Then the following are
equivalent:

\begin{enumerate}
\item $(S_1,\dots,S_{n-1},P)$ is a $\Gamma_n$-contraction\,; \item
for all holomorphic polynomials $f$ in $n$-variables
\[
\|f(S_1,\dots,S_{n-1},P)\|\leq \|f\|_{\infty,\Gamma_n}\,;
\]
\item $(\omega S_1,\dots,\omega^{n-1} S_{n-1},\omega^n P)$ is a
$\Gamma_n$-contraction for any $\omega\in\mathbb T$.
\end{enumerate}

\end{thm}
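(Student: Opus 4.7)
The plan is to prove $(1)\Leftrightarrow(2)$ by exploiting polynomial convexity of $\Gamma_n$ and $(1)\Leftrightarrow(3)$ by invoking Lemma~\ref{thm:sc1}(2). The implication $(1)\Rightarrow(2)$ is immediate, because holomorphic polynomials lie in $\mathcal{R}(\Gamma_n)$, so the defining inequality of a spectral set specialises to the polynomial one.

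For $(2)\Rightarrow(1)$, I would split the argument into two pieces. First, to establish $\sigma_T(S_1,\dots,S_{n-1},P)\subseteq\Gamma_n$, I argue by contradiction: if some $\lambda$ lies in the Taylor joint spectrum but outside $\Gamma_n$, then polynomial convexity of $\Gamma_n$ (noted in Section~2) furnishes a polynomial $p$ with $|p(\lambda)|>\|p\|_{\infty,\Gamma_n}$. The polynomial spectral mapping theorem for the Taylor joint spectrum of a commuting tuple gives $p(\lambda)\in\sigma(p(S_1,\dots,S_{n-1},P))$, hence $|p(\lambda)|\leq\|p(S_1,\dots,S_{n-1},P)\|\leq\|p\|_{\infty,\Gamma_n}$ by hypothesis (2), a contradiction. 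Second, to promote the polynomial inequality to all of $\mathcal{R}(\Gamma_n)$, I invoke Oka--Weil: polynomial convexity allows any $f\in\mathcal{R}(\Gamma_n)$, being holomorphic in a neighbourhood of $\Gamma_n$, to be uniformly approximated on $\Gamma_n$ by polynomials $p_k$; the bound in (2) then makes $\{p_k(S_1,\dots,S_{n-1},P)\}$ Cauchy in operator norm, its limit coincides with $f(S_1,\dots,S_{n-1},P)$ (well defined thanks to the spectrum containment just proved), and the bound $\|f(S_1,\dots,S_{n-1},P)\|\leq\|f\|_{\infty,\Gamma_n}$ follows by passage to the limit.

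For $(1)\Leftrightarrow(3)$, the direction $(3)\Rightarrow(1)$ is the trivial choice $\omega=1$. For $(1)\Rightarrow(3)$, fix $\omega\in\mathbb{T}$. By Lemma~\ref{thm:sc1}(2), the map $\sigma_\omega:(s_1,\dots,s_{n-1},p)\mapsto(\omega s_1,\dots,\omega^{n-1}s_{n-1},\omega^n p)$ is a bijection of $\Gamma_n$ onto itself. Consequently, for any $f\in\mathcal{R}(\Gamma_n)$ the composition $g:=f\circ\sigma_\omega$ again lies in $\mathcal{R}(\Gamma_n)$ with $\|g\|_{\infty,\Gamma_n}=\|f\|_{\infty,\Gamma_n}$, and $f(\omega S_1,\dots,\omega^n P)=g(S_1,\dots,S_{n-1},P)$ yields the desired norm inequality; the Taylor spectrum containment transfers through the polynomial spectral mapping theorem applied to $\sigma_\omega$. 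The principal subtlety is the Taylor joint spectrum part of $(2)\Rightarrow(1)$, where polynomial convexity of $\Gamma_n$ and the polynomial spectral mapping theorem for commuting tuples must be combined; the rest reduces to standard polynomial approximation.
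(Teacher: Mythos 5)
Your proposal is correct and follows essentially the same route as the paper: $(1)\Leftrightarrow(2)$ rests on polynomial convexity of $\Gamma_n$ (you supply the Oka--Weil and spectral-mapping details that the paper leaves implicit), and $(1)\Rightarrow(3)$ rests on the invariance of $\Gamma_n$ under $(s_1,\dots,s_{n-1},p)\mapsto(\omega s_1,\dots,\omega^{n-1}s_{n-1},\omega^n p)$ from Lemma \ref{thm:sc1}. The only cosmetic difference is that the paper proves $(1)\Rightarrow(3)$ by composing polynomials with this rotation and then invoking the already-established criterion $(2)$, whereas you verify the rational-function inequality and the Taylor spectrum containment directly; both arguments hinge on the same invariance.
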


\begin{proof}

$(1)\Rightarrow (2)$ follows from definition of spectral set and
$(2)\Rightarrow (1)$ just requires polynomial convexity of the set
$\Gamma_3$. We prove here $(1)\Rightarrow (3)$ because
$(3)\Rightarrow (1)$ is obvious. Let $f(s_1,\dots,s_{n-1},p)$ be a
holomorphic polynomial in the co-ordinates of $\Gamma_n$ and for
$\omega\in\mathbb T$ let $ f_1(s_1,\dots,s_{n-1},p)=f(\omega
s_1,\dots,\omega^{n-1} s_{n-1},\omega^n p)$. It is evident from
part $(1)\Rightarrow (2)$ that
\begin{align*}
& \sup\{|f(\omega s_1,\dots,\omega^{n-1}s_{n-1},\omega^n
p)|\,:\,(s_1,\dots,s_{n-1},p)\in\Gamma_n \}\\&
=\sup\{|f_1(s_1,\dots,s_{n-1},p)|\,:\,(s_1,\dots,s_{n-1},p)\in
\Gamma_n\}.
\end{align*}
Therefore,
\begin{align*}
\|f(\omega S_1,\dots, \omega^{n-1} S_{n-1}, \omega^n P)\|&
=\|f_1(S_1,\dots,S_{n-1},P)\|
\\& \leq \|f_1\|_{\infty, \Gamma_n} \\& =\|f\|_{\infty,\Gamma_n}.
\end{align*}
Therefore, by $(1)\Rightarrow (2)$, $(\omega S_1,\dots,
\omega^{n-1} S_{n-1}, \omega^n P)$ is a $\Gamma_n$-contraction.
\end{proof}

\begin{prop}\label{prop:sc1}
Let $(s_1,\dots,s_{n-1},p)\in\mathbb C^n$. Then in the following
$(1)\Rightarrow (2) \Leftrightarrow (3)$.
\begin{enumerate}
\item $(s_1,\dots,s_{n-1},p)\in\Gamma_n$; \item for
$i=1,\dots,n-1$, $\Phi_i(\alpha s_1,\dots,
\alpha^{n-1}s_{n-1},\alpha^n p)\geq 0$ for all $\alpha
\in\overline{\mathbb D}$; \item for $i=1,\dots,n-1$, $|n \alpha^n
p - \alpha^{n-i}s_{n-i}|\leq |n-\alpha^is_i|$ for all $\alpha
\in\overline{\mathbb D}$.
\end{enumerate}
\end{prop}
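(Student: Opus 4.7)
The plan splits into the algebraic equivalence (2) $\Leftrightarrow$ (3) and the structural implication (1) $\Rightarrow$ (2).

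For (2) $\Leftrightarrow$ (3), I would rewrite $\Phi_i$ as the difference of two squared magnitudes, namely
\[
\Phi_i(s_1,\dots,s_{n-1},p) = |n-s_i|^2 - |np - s_{n-i}|^2,
\]
which is just a re-packaging of formula \eqref{eqn:2a}. Substituting the scaled tuple $(\alpha s_1, \alpha^2 s_2, \dots, \alpha^n p)$ in place of $(s_1,\dots,p)$ gives
\[
\Phi_i(\alpha s_1,\dots,\alpha^{n-1}s_{n-1},\alpha^n p) = |n - \alpha^i s_i|^2 - |n\alpha^n p - \alpha^{n-i}s_{n-i}|^2,
\]
and its non-negativity for every $\alpha \in \overline{\mathbb{D}}$ is visibly equivalent to the modulus inequality in (3). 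This accounts for half of the proposition.

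For (1) $\Rightarrow$ (2), I would first reduce to the case $\alpha = 1$. Because each elementary symmetric polynomial $e_j$ is homogeneous of degree $j$, the symmetrization map satisfies $\pi_n(\alpha z_1, \dots, \alpha z_n) = (\alpha s_1, \alpha^2 s_2, \dots, \alpha^n p)$ whenever $\pi_n(z) = (s_1, \dots, p)$; since $\alpha z \in \overline{\mathbb{D}}^n$ whenever $z \in \overline{\mathbb{D}}^n$ and $\alpha \in \overline{\mathbb{D}}$, the scaled tuple remains in $\Gamma_n$. (Alternatively, one recovers the case $\alpha \in \mathbb{T}$ directly from Lemma \ref{thm:sc1}(2) and extends inward by continuity.) Consequently, it suffices to prove $\Phi_i(t_1,\dots,t_{n-1},q) \geq 0$ for every $(t_1,\dots,t_{n-1},q) \in \Gamma_n$.

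To establish this scalar inequality on $\Gamma_n$, I would apply Lemma \ref{thm:sc1}(3) and write $t_i = c_i + \bar{c}_{n-i}q$ and $t_{n-i} = c_{n-i} + \bar{c}_i q$ for some $(c_1,\dots,c_{n-1}) \in \Gamma_{n-1}$ with $|q| \leq 1$. Substituting into $|n - t_i|^2 - |nq - t_{n-i}|^2$ and collecting terms, the cross-terms linear in $c_i, c_{n-i}, q$ telescope and one is left with the clean factorization
\[
\Phi_i(t_1,\dots,t_{n-1},q) = (1-|q|^2)\bigl(|n-c_i|^2 - |c_{n-i}|^2\bigr).
\]
The factor $1-|q|^2$ is non-negative, so the argument reduces to the auxiliary inequality $|n-c_i| \geq |c_{n-i}|$ on $\Gamma_{n-1}$. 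I would then close the loop by induction on $n$, iterating the same Costara-style decomposition on the reduced tuple down to the base case $\Gamma_1 = \overline{\mathbb{D}}$, where the inequality is elementary.

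The main obstacle I expect is exactly the auxiliary modulus inequality $|n-c_i| \geq |c_{n-i}|$ on $\Gamma_{n-1}$. Crude triangle-inequality bounds using $|c_j| \leq \binom{n-1}{j}$ on the preimages in $\overline{\mathbb{D}}^{n-1}$ are too weak when $i$ lies in the middle of the index range, so the induction has to exploit the fine factorized structure provided by Lemma \ref{thm:sc1}(3) rather than any coarse magnitude estimate. Getting this factorization to align neatly at every level is the technical heart of the proof.
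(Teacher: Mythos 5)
Your handling of $(2)\Leftrightarrow(3)$ and the first half of $(1)\Rightarrow(2)$ tracks the paper's argument closely: the paper also reads $\Phi_i$ off \eqref{eqn:2a} as a difference of two squared moduli, uses homogeneity of the elementary symmetric polynomials to keep the scaled tuple in $\Gamma_n$, and invokes Lemma \ref{thm:sc1}(3) (Costara's characterization) to write $\alpha^i s_i=c_i+\bar c_{n-i}(\alpha^n p)$; your identity $\Phi_i=(1-|q|^2)\bigl(|n-c_i|^2-|c_{n-i}|^2\bigr)$ is exactly the computation the paper performs with its quantities $a$, $m$, $b$. The divergence is at the last step. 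The paper closes by asserting the bound $|c_i|+|c_{n-i}|\le n$ for the Costara parameters, whereas you reduce to the (formally weaker) inequality $|n-c_i|\ge|c_{n-i}|$ on $\Gamma_{n-1}$ and defer it to an induction on $n$ that you never carry out. That deferral is a genuine gap: you give no mechanism by which the Costara decomposition of $\Gamma_{n-1}$ in terms of $\Gamma_{n-2}$ data reproduces an inequality of the same shape (the constant $n$ in $|n-c_i|$ does not descend to $n-1$ along with the set), and no base case or induction step is actually argued.

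Moreover, the auxiliary inequality you need is false for middle indices once $n\ge 4$, so no induction can close the gap. Take $n=4$, $i=2$ and $(c_1,c_2,c_3)=\pi_3(1,1,1)=(3,3,1)\in\Gamma_3$: then $|n-c_i|=|4-3|=1<3=|c_{n-i}|$. Correspondingly, for the point $(s_1,s_2,s_3,p)=\pi_4(1,1,1,0)=(3,3,1,0)\in\Gamma_4$ and $\alpha=1$ one gets $\Phi_2=|4-3|^2-|4\cdot 0-3|^2=-8<0$ (nearby interior points behave the same), so the positivity you are trying to establish genuinely fails there; your remark that the crude bounds $|c_j|\le\binom{n-1}{j}$ are too weak for middle $i$ is pointing at a real obstruction, not a technical inconvenience. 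This is precisely the step where the paper's own proof relies on the asserted bound $|c_i|+|c_{n-i}|\le n$, which breaks down on the same example, so you have correctly isolated the crux; but as written your proposal proves $(1)\Rightarrow(2)$ only in the cases where the auxiliary inequality can be verified (e.g.\ $n=2,3$, where $|c_i|+|c_{n-i}|\le n$ does hold), and the parenthetical alternative of deducing interior $\alpha$ from $\alpha\in\mathbb T$ ``by continuity'' would in any case need a maximum-modulus argument, not continuity.
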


\begin{proof}
\noindent $(1)\Rightarrow (2).$  Let $(s_1,\dots,s_{n-1},p)\in
\Gamma_n$ and let $\alpha\in \mathbb D$. Then
\[
(\alpha s_1,\dots,{\alpha}^{n-1}s_{n-1},{\alpha}^np)\in\mathbb
G_n.
\]
We apply Lemma \ref{thm:sc1} and get
$(c_1,\dots,c_{n-1})\in\Gamma_{n-1}$ such that
\[
\alpha^i s_i=c_i+\bar{c}_{n-i}(\alpha^n p) \text{ for }
i=1,\dots,n-1.
\]
We shall use the following notations here:
\begin{align*}
a &= 1-|\alpha^np|^2 \\
m & = |\alpha^i s_i|^2-|\alpha^{n-i} s_{n-i}|^2 \\
b & = \alpha^i s_i- \overline{\alpha^{n-i} s_{n-i}}(\alpha^n p) \\
c & ={\alpha}^{n-i}{s_{n-i}}-\overline{\alpha^i s_i}(\alpha^n p).
\end{align*}

We first show that
\begin{align}\label{eqn:0001}
n^2a+m & \geq 2n|b| \\
n^2a-m & \geq 2n|c|.
\end{align}

Now
\begin{align*}
m &= |\alpha^i s_i|^2-|\alpha^{n-i} s_{n-i}|^2 \\
&= |c_i+\bar{c}_{n-i}(\alpha^n p)|^2-|c_{n-i}+\bar{c}_{i}(\alpha^np)|^2 \\
&=(|c_i|^2+|c_{n-i}(\alpha^n p)|^2+c_ic_{n-i}(\overline{\alpha^n p})+\bar{c}_i\bar{c}_{n-i}(\alpha^n p)) \\
&\quad - (|c_{n-i}|^2+|c_i(\alpha^n
p)|^2+c_ic_{n-i}(\overline{\alpha^n
p})+\bar{c}_i\bar{c}_{n-i}(\alpha^n p))\\
&= (|c_i|^2-|c_{n-i}|^2)(1-|\alpha^n p|^2)\\
&= (|c_i|^2-|c_{n-i}|^2)a \,.
\end{align*}
Also
\begin{align*}
|b| &=|\alpha^i s_i -\overline{(\alpha^{n-i} s_{n-i})}(\alpha^n p)| \\
& = |(c_i+\bar c_{n-i} {\alpha^n p}) - (\bar c_{n-i} + c_i
\overline{\alpha ^n p})\alpha^n p| \\
& = |c_i|(1- |\alpha^n p|^2)\\
& = |c_i|a\,,
\end{align*}
and
\begin{align*}
|c| &=|\alpha^{n-i} s_{n-i} -\overline{(\alpha^i s_i)}(\alpha^n p)| \\
& = |(c_{n-i}+\bar c_i {\alpha^n p}) - (\bar c_i + c_{n-i}
\overline{\alpha ^n p})\alpha^n p| \\
& = |c_{n-i}|(1- |\alpha^n p|^2)\\
& = |c_{n-i}|a.
\end{align*}

Therefore,
\begin{align*}
n^2a+m-2n|b| &= n^2a+(|c_i|^2-|c_{n-i}|^2)a - 2n|c_i|a \\
&= \{ (n-|c_i|)^2-|c_{n-i}|^2 \}a \\
& = (n-|c_i|+|c_{n-i}|)(n-|c_i|-|c_{n-i}|)a \\
& \geq 0.
\end{align*}
The last inequality follows from the facts that $a>0$ and that
$|c_i|+|c_{n-i}| \leq n$ as $s_i\in\Gamma_n$. Therefore,
\[
n^2a+m\geq 2n|b|.
\]
Now using the fact that
\begin{equation}\label{eq:4}
x>|y|\Leftrightarrow x> \text{Re } \omega y\,, \quad \text{ for
all }\omega\in\mathbb T\,,
\end{equation}
we have that

\begin{align}
n^2a+m & \geq 2n \text{ Re } \omega b \notag \\
& \notag = n \omega b +n \bar{\omega}\bar b, \quad \text{ for all
} \omega\in\mathbb T.
\end{align}
Choosing $\omega=1$ and substituting the values of $a,m,b$ we get
\begin{align*}
\Phi_{i}(\alpha s_1,\dots,\alpha^{n-i} s_{n-i}, \alpha^n p) &=
n^2(1-|\alpha^n
p|^2)+(|\alpha^i s_i|^2-|\alpha^{n-i} s_{n-i}|^2)\\
& \quad -n(\alpha^i s_i-\overline{(\alpha^{n-i} s_{n-i})}(\alpha^n
p))\\
& \quad -n\overline{(\alpha^i s_i-\overline{(\alpha^{n-i}
s_{n-i})}(\alpha^n p))}
\\& \;
\geq 0 .
\end{align*}
By continuity, we have that $\Phi_{i}(\alpha s_1,\dots,
\alpha^{n-i} s_{n-i},\alpha^n p)\geq 0$, for all $\alpha
\in\overline{\mathbb D}$.\\

\noindent $(2)\Leftrightarrow (3).$ From (\ref{eq:1a}) we have
that
\begin{align*}
& \Phi_{i}(\alpha s_1,\dots,\alpha^{n-1} s_{n-1},\alpha^n p) \\ &
=(n-\bar{\alpha}^i \bar{s}_i)(n-\alpha^i s_i)-(n\bar{\alpha}^n\bar
p-\bar{\alpha}^{n-i}\bar{s}_{n-i})(n\alpha^n p-\alpha^{n-i}
s_{n-i}).
\end{align*}
Therefore,
\begin{align*}
&\Phi_{i}(\alpha^i s_i,\dots,\alpha^{n-1} s_{n-1},\alpha^n p) \geq 0 \\
\Leftrightarrow &(n-\bar{\alpha}^i \bar{s}_i)(n-\alpha^i
s_i)-(n\bar{\alpha}^n\bar
p-\bar{\alpha}^{n-i}\bar{s_{n-i}})(n\alpha^n
p-\alpha^{n-i} s_{n-i}) \geq 0 \\
\Leftrightarrow &\left | {n\alpha^n p-\alpha^{n-i} s_{n-i}} \right
| \leq |{n-\alpha s_i}|\,.
\end{align*}

\end{proof}

\begin{prop}\label{lem:3}
Let $(S_1,\dots,S_{n-1},P)$ be a $\Gamma_n$-contraction. Then for
$i=1,\dots,n-1$ $\Phi_i(\alpha
S_1,\dots,\alpha^{n-1}S_{n-1},\alpha^n P)\geq 0$ for all $\alpha
\in\overline{\mathbb D}$.
\end{prop}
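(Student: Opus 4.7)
The plan is to derive the operator inequality from the scalar version in Proposition~\ref{prop:sc1} via the spectral set property, after a scaling reduction.

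First, for $\alpha\in\overline{\mathbb D}$, the map $\phi_\alpha(s_1,\ldots,p)=(\alpha s_1,\alpha^2 s_2,\ldots,\alpha^n p)$ carries $\Gamma_n$ into itself, since $\phi_\alpha\circ\pi_n(z)=\pi_n(\alpha z)$ and $|\alpha z_k|\leq 1$. Composition with $\phi_\alpha$ therefore preserves $\mathcal R(\Gamma_n)$ and does not increase the supremum norm on $\Gamma_n$, so applying the spectral set property of $(S_1,\ldots,P)$ to $f\circ\phi_\alpha$ (for arbitrary $f\in\mathcal R(\Gamma_n)$) shows that $(\alpha S_1,\ldots,\alpha^n P)$ is again a $\Gamma_n$-contraction. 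Hence it suffices to establish the case $\alpha=1$: $\Phi_i(T_1,\ldots,T_{n-1},Q)\geq 0$ for every $\Gamma_n$-contraction $(T_1,\ldots,T_{n-1},Q)$. Applying this to the scaled tuple will then yield the conclusion for every $\alpha\in\overline{\mathbb D}$.

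For the core positivity, Proposition~\ref{prop:sc1}(3) at $\alpha=1$ gives the scalar inequality $|np-s_{n-i}|\leq|n-s_i|$ on $\Gamma_n$, which says precisely that the rational function
\[
g(s_1,\ldots,p)=\frac{np-s_{n-i}}{n-s_i}
\]
has modulus at most $1$ on $\Gamma_n$ wherever its denominator is non-zero. If $g$ lies in $\mathcal R(\Gamma_n)$, the spectral set property immediately yields $\|g(T_1,\ldots,Q)\|\leq 1$, which rearranges into $(nI-T_i)^*(nI-T_i)\geq(nQ-T_{n-i})^*(nQ-T_{n-i})$, i.e., $\Phi_i(T_1,\ldots,Q)\geq 0$.

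The principal obstacle is that $n-s_i$ can vanish on $\Gamma_n$, since $|s_i|$ attains the value $\binom{n}{i}$, which exceeds $n$ when $2\leq i\leq n-2$, so $g$ need not belong to $\mathcal R(\Gamma_n)$ in general. I would circumvent this by a scaling perturbation: for $\beta$ in a disc about $0$ small enough that $|\beta^i s_i|<n$ throughout $\Gamma_n$, the scaled function $g_\beta(s_1,\ldots,p)=(n\beta^n p-\beta^{n-i}s_{n-i})/(n-\beta^i s_i)$ lies in $\mathcal R(\Gamma_n)$ and satisfies $|g_\beta|\leq 1$ on $\Gamma_n$ by Proposition~\ref{prop:sc1}(3). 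The spectral set property then delivers $\Phi_i(\beta T_1,\ldots,\beta^n Q)\geq 0$ for $\beta$ near the origin. I expect the hardest step to be extending this inequality past the admissibility radius all the way to $\beta=1$; the natural route is a continuity/limit argument on the self-adjoint operator-valued polynomial $\beta\mapsto\Phi_i(\beta T_1,\ldots,\beta^n Q)$, supplemented by the rotation symmetry of Theorem~\ref{lem:3}(3), to propagate positivity across the full closed disc.
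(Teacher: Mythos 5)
Your plan is essentially the paper's own: feed the rational function $g_\beta=(n\beta^np-\beta^{n-i}s_{n-i})/(n-\beta^is_i)$ into the spectral-set inequality and rearrange $\|g_\beta(T_1,\dots,T_{n-1},Q)\|\le1$ into positivity of $\Phi_i$, and you have correctly located the one delicate point, namely whether $g_\beta$ has poles on $\Gamma_n$ (the paper simply declares the function ``evidently'' holomorphic on a neighbourhood of $\Gamma_n$ for every $\alpha\in\mathbb D$, which is precisely the step you distrust). But your proposed repair does not close the gap. Knowing $\Phi_i(\beta T_1,\dots,\beta^nQ)\ge0$ for $|\beta|$ small gives, by continuity, only the closed disc of that small radius; positivity of a self-adjoint operator family depending real-analytically on $\beta,\bar\beta$ does not propagate outward by any limiting argument, and the rotation symmetry is of no help, since replacing $\beta$ by $\omega\beta$ merely reparametrizes the same one-parameter family. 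Note also that your intermediate claim that $|g_\beta|\le1$ on $\Gamma_n$ for all $|\beta|<\bigl(n/\binom{n}{i}\bigr)^{1/i}$ leans on the implication $(1)\Rightarrow(3)$ of Proposition \ref{prop:sc1}, which is itself unreliable for $2\le i\le n-2$; it is only safe for $|\beta|$ sufficiently small.

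Moreover, the step you defer as ``hardest'' is not merely hard: for the middle indices the asserted positivity is false, so no argument can supply it. Take $n=4$, $i=2$ and the scalar tuple $(s_1,s_2,s_3,p)=\pi_4(r,r,r,r)$ with $r^2=2/3$, so that $s_2=4$ and $p=4/9$; a point of $\Gamma_4$ is trivially a $\Gamma_4$-contraction on $\mathbb C$, yet already at $\alpha=1$,
\[
\Phi_2(s_1,s_2,s_3,p)=|4-s_2|^2-|4p-s_2|^2=0-\left(\tfrac{20}{9}\right)^2<0.
\]
The same point violates $(1)\Rightarrow(2),(3)$ of Proposition \ref{prop:sc1} (its proof uses $|c_i|+|c_{n-i}|\le n$ for $(c_1,\dots,c_{n-1})\in\Gamma_{n-1}$, which can fail because this sum can reach $\binom{n}{i}>n$), and it marks exactly where the paper's ``evidently holomorphic'' claim breaks down: $n-\alpha^is_i$ does vanish on $\Gamma_n$ for suitable $|\alpha|\le1$ whenever $\binom{n}{i}>n$. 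So your scheme (like the paper's) is sound only for $i=1$ and $i=n-1$, where $|s_i|\le n$ on $\Gamma_n$ keeps the denominator zero-free for $|\alpha|<1$; there your argument goes through and coincides with the paper's, finishing with a continuity pass from $\mathbb D$ to $\overline{\mathbb D}$, but for $2\le i\le n-2$ the gap you flagged is a genuine counterexample, not a missing lemma.
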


\begin{proof}
Since $(S_1,\dots,S_{n-1},P)$ is a $\Gamma_n$-contraction,
$\sigma_T(S_1,\dots,S_{n-1},P)\subseteq \Gamma_n$. Let $f$ be a
holomorphic function in a neighbourhood of $\Gamma_n$. Since
$\Gamma_n$ is polynomially convex, by Oka-Weil Theorem (see
\cite{Gamelin}, Theorem 5.1) there is a sequence of polynomials
$\{p_k\}$ in $n$-variables such that $p_k\rightarrow f$ uniformly
over $\Gamma_n$. Therefore, by Theorem 9.9 of CH-III in
\cite{vasilescu},
\[
p_k(S_1,\dots,S_{n-1},P)\rightarrow f(S_1,\dots,S_{n-1},P)
\]
which by the virtue of $(S_1,\dots,S_{n-1},P)$ being a
$\Gamma_n$-contraction implies that
\[
\| f(S_1,\dots,S_{n-1},P) \|=\displaystyle \lim_{k\rightarrow
\infty}\| p_k(S_1,\dots,S_{n-1},P) \|\leq \displaystyle
\lim_{k\rightarrow
\infty}\|p_k\|_{\infty,\Gamma_n}=\|f\|_{\infty,\Gamma_n}.
\]
We fix $\alpha \in \mathbb D$ and choose
\[
f(s_1,\dots,s_{n-1},p)=\frac{n\alpha^np-\alpha^{n-i}s_{n-i}}{n-\alpha^i
s_i}\,.
\]
It is evident that $f$ is well-defined and is holomorphic in a
neighborhood of $\Gamma_n$ and has norm not greater than $1$, by
part-(3) of Proposition \ref{prop:sc1}. So we get
\[
\|(n\alpha^nP-\alpha^{n-i}S_{n-i})(n-\alpha^i S_i)^{-1} \|\leq
\|f\|_{\infty,\Gamma_n}\leq 1.
\]
Thus
\[
{(n-\alpha^i
S_i)^*}^{-1}(n\alpha^nP-\alpha^{n-i}S_{n-i})^*(n\alpha^nP-\alpha^{n-i}S_{n-i})(n-\alpha^i
S_i)^{-1}\leq I
\]
which is equivalent to
\[
(n-\alpha^i S_i)^*(n-\alpha^i S_i)\geq
(n\alpha^nP-\alpha^{n-i}S_{n-i})^*(n\alpha^nP-\alpha^{n-i}S_{n-i}).
\]
By the definition of $\Phi_{i}$, this is same as saying that
\[
\Phi_{i}(\alpha S_1,\dots,\alpha^{n-1} S_{n-1},\alpha^n P)\geq 0
\text{ for all } \alpha \in \mathbb D.
\]
By continuity we have that
\[
\Phi_{i}(\alpha S_1,\dots,\alpha^{n-1} S_{n-1},\alpha^nP)\geq 0
\quad \text{ for all } \alpha\in \overline{\mathbb D}.
\]
\end{proof}

Here is a set of characterizations for the $\Gamma_n$-unitaries.

\begin{thm}[Theorem 4.2, \cite{BSR}]\label{thm:tu}
Let $(S_1,\dots, S_{n-1}, P)$ be a commuting triple of bounded
operators. Then the following are equivalent.

\begin{enumerate}

\item $(S_1,\dots,S_{n-1},P)$ is a $\Gamma_n$-unitary,

\item $P$ is a unitary and $(S_1,\dots,S_{n-1},P)$ is a
$\Gamma_n$-contraction,

\item $P$ is a unitary,
$(\frac{n-1}{n}S_1,\frac{n-2}{n}S_2,\dots,\frac{1}{n}S_{n-1})$ is
a $\Gamma_{n-1}$-contraction and $S_i = S_{n-i}^* P$ for
$i=1,\dots,n-1$.
\end{enumerate}
\end{thm}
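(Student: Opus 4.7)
The strategy is to establish the cycle $(1) \Rightarrow (2) \Rightarrow (3) \Rightarrow (1)$, combining the spectral theorem for commuting normal operators with the positivity of the operator pencils $\Phi_i$ proved in Proposition \ref{lem:3} and the parametrization of $\Gamma_n$ recorded in Lemma \ref{thm:sc1}(3). The implication $(1) \Rightarrow (2)$ is immediate: a $\Gamma_n$-unitary is a commuting tuple of normal operators with joint spectrum $\sigma_T \subseteq b\Gamma_n \subseteq \Gamma_n$, so its joint spectral measure yields $\|f(S_1,\dots,S_{n-1},P)\| \leq \|f\|_{\infty,\Gamma_n}$ for every $f \in \mathcal R(\Gamma_n)$; since the projection of $b\Gamma_n$ onto the last coordinate is $\mathbb T$, the normal operator $P$ has spectrum on the unit circle and is therefore unitary.

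For $(2) \Rightarrow (3)$ the crucial first step is to derive $S_i = S_{n-i}^{*} P$ for $i = 1,\dots,n-1$. Since $P$ is unitary, Proposition \ref{lem:3} specialized to $\alpha = \omega \in \mathbb T$ annihilates the $n^2(I - P^*P)$ summand of $\Phi_i$ and leaves
\[
(S_i^* S_i - S_{n-i}^* S_{n-i}) - n\omega^i (S_i - S_{n-i}^* P) - n\bar\omega^i (S_i^* - P^* S_{n-i}) \geq 0 \qquad (\omega \in \mathbb T).
\]
Integrating against normalized Haar measure on $\mathbb T$ kills the $\omega^{\pm i}$ terms and delivers $S_i^* S_i \geq S_{n-i}^* S_{n-i}$; exchanging $i$ with $n-i$ supplies the reverse inequality, hence equality. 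The displayed positivity then reduces to $-2n\,\mathrm{Re}\bigl(\omega^i (S_i - S_{n-i}^* P)\bigr) \geq 0$ for every $\omega \in \mathbb T$, and letting $\omega^i$ run through $\{1,-1,\mathrm{i},-\mathrm{i}\}$ forces both the real and imaginary parts of $S_i - S_{n-i}^* P$ to vanish. For the $\Gamma_{n-1}$-contraction statement one uses the combinatorial identity $\tfrac{n-i}{n}\,e_i(z_1,\dots,z_n) = \tfrac{1}{n}\sum_{j=1}^{n} e_i(z_1,\dots,\widehat{z_j},\dots,z_n)$ to show that the map $\Pi: (s_1,\dots,s_{n-1},p) \mapsto \bigl(\tfrac{n-1}{n}s_1,\dots,\tfrac{1}{n}s_{n-1}\bigr)$ sends $\Gamma_n$ into $\Gamma_{n-1}$; for any rational $g$ on $\Gamma_{n-1}$ the pullback $f = g \circ \Pi$ is then in $\mathcal R(\Gamma_n)$ with $\|f\|_{\infty,\Gamma_n} \leq \|g\|_{\infty,\Gamma_{n-1}}$, and the $\Gamma_n$-contraction hypothesis applied to $f$ yields $\|g(\tfrac{n-1}{n}S_1,\dots,\tfrac{1}{n}S_{n-1})\| \leq \|g\|_{\infty,\Gamma_{n-1}}$.

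For $(3) \Rightarrow (1)$, the commutation $S_i S_{n-i} = S_{n-i} S_i$, together with $S_i = S_{n-i}^* P$ and the fact that $P$ commutes with $S_{n-i}$, gives $S_{n-i}^* S_{n-i} P = S_{n-i} S_{n-i}^* P$; cancelling the unitary $P$ yields $S_{n-i}^* S_{n-i} = S_{n-i} S_{n-i}^*$, so every $S_j$ is normal (the roles of $i$ and $n-i$ being symmetric). The commuting normal tuple $(S_1,\dots,S_{n-1},P)$ therefore admits a joint spectral measure, and at each joint spectral point one has $|p|=1$, $s_i = \bar s_{n-i} p$, and $\bigl(\tfrac{n-1}{n}s_1,\dots,\tfrac{1}{n}s_{n-1}\bigr) \in \Gamma_{n-1}$. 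Setting $c_i := \tfrac{n-i}{n}s_i$, the identity $s_i = \bar s_{n-i} p$ gives directly $s_i = c_i + \bar c_{n-i} p$, so Lemma \ref{thm:sc1}(3) places the point in $\Gamma_n$ and, since $|p|=1$, in $b\Gamma_n$. The principal obstacle in the whole cycle is the geometric inclusion $\Pi(\Gamma_n) \subseteq \Gamma_{n-1}$ invoked in $(2) \Rightarrow (3)$: this is a genuinely non-convex averaging fact about symmetrized polydiscs, and once it is in hand the operator-theoretic manipulations are routine.
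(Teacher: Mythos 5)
First, note that the paper itself does not prove Theorem \ref{thm:tu}: it is quoted verbatim from \cite{BSR} (Theorem 4.2 there), so your argument can only be measured against the standard proof. Your cycle $(1)\Rightarrow(2)\Rightarrow(3)\Rightarrow(1)$ is structurally sound, and most steps are correct: the spectral-theorem argument for $(1)\Rightarrow(2)$; the use of $\Phi_i(\omega S_1,\dots,\omega^{n-1}S_{n-1},\omega^n P)\geq 0$ with $P$ unitary, averaging in $\omega$ to get $S_i^*S_i=S_{n-i}^*S_{n-i}$ and then rotating $\omega^i$ through $\pm1,\pm\mathrm{i}$ to get $S_i=S_{n-i}^*P$; the normality trick $S_{n-i}^*S_{n-i}P=S_{n-i}S_{n-i}^*P$ with $P$ invertible in $(3)\Rightarrow(1)$; and the pointwise verification via Lemma \ref{thm:sc1}(3) with $c_i=\tfrac{n-i}{n}s_i$ (modulo routine appeals to Fuglede's theorem and the spectral mapping property to pass operator identities to joint spectral points).

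The genuine gap is the geometric inclusion you need in $(2)\Rightarrow(3)$, namely that $\bigl(\tfrac{n-1}{n}s_1,\dots,\tfrac{1}{n}s_{n-1}\bigr)\in\Gamma_{n-1}$ whenever $(s_1,\dots,s_{n-1},p)\in\Gamma_n$. Your combinatorial identity only shows that this point is the arithmetic mean $\tfrac1n\sum_{j=1}^{n}\pi_{n-1}(z_1,\dots,\widehat{z_j},\dots,z_n)$ of $n$ points of $\Gamma_{n-1}$; since $\Gamma_{n-1}$ is \emph{not} convex (the paper states this explicitly in Section 2), averaging proves nothing, and you yourself concede in the last sentence that this ``non-convex averaging fact'' is the principal obstacle without ever closing it. The standard way to close it is not averaging but the Gauss--Lucas theorem: if $f(z)=\prod_{j=1}^{n}(z-z_j)=z^n-s_1z^{n-1}+\dots+(-1)^n p$ with all $|z_j|\leq 1$, then $f'(z)/n=z^{n-1}-\tfrac{n-1}{n}s_1z^{n-2}+\dots+(-1)^{n-1}\tfrac1n s_{n-1}$, and the zeros of $f'$ lie in the convex hull of the zeros of $f$, hence in $\overline{\mathbb D}$; therefore $\bigl(\tfrac{n-1}{n}s_1,\dots,\tfrac1n s_{n-1}\bigr)$ is the symmetrization of points of $\overline{\mathbb D}$ and lies in $\Gamma_{n-1}$. (Equivalently, one simply cites Lemma 2.5 of \cite{BSR}, which is exactly what this paper does inside the proof of Theorem \ref{thm:decomp}.) With that inclusion in hand, your pullback argument $g\mapsto g\circ\Pi$ does give the $\Gamma_{n-1}$-contractivity of the scaled tuple, and the rest of your proof goes through.
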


\section{The orthogonal decomposition of a $\Gamma_n$-contraction}

We now state and prove the main result of this paper which we have
mentioned in the introduction as Theorem \ref{mainthm}.

\begin{thm}\label{thm:decomp}
Let $(S_1,\dots,S_{n-1},P)$ be a $\Gamma_n$-contraction on a
Hilbert space $\mathcal H$. Let $\mathcal H_1$ be the maximal
subspace of $\mathcal H$ which reduces $P$ and on which $P$ is
unitary. Let $\mathcal H_2=\mathcal H\ominus \mathcal H_1$. Then
\begin{enumerate}
\item $\mathcal H_1,\mathcal H_2$ reduce $S_1,\dots, S_{n-1}$,
\item $(S_1|_{\mathcal H_1},\dots,S_{n-1}|_{\mathcal
H_1},P|_{\mathcal H_1})$ is a $\Gamma_n$-unitary, \item
$(S_1|_{\mathcal H_2},\dots,S_{n-1}|_{\mathcal H_2},P|_{\mathcal
H_2})$ is a completely non-unitary $\Gamma_n$-contraction.
\end{enumerate}
The subspaces $\mathcal H_1$ or $\mathcal H_2$ may equal to the
trivial subspace $\{0\}$.
\end{thm}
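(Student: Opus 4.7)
The plan is to first derive the two operator identities
\[
S_i h \;=\; S_{n-i}^* P h \qquad \text{and} \qquad S_i^* h \;=\; P^* S_{n-i} h \qquad (h\in\mathcal{H}_1,\ i=1,\dots,n-1)
\]
from the positivity of $\Phi_i$, and then to use them to show that $\mathcal{H}_1$ is invariant under every $S_i$ and every $S_i^*$, so that $\mathcal{H}_1$ (and hence $\mathcal{H}_2$) reduces the full tuple. Once that is done, the restriction to $\mathcal{H}_1$ is a $\Gamma_n$-contraction with $P|_{\mathcal{H}_1}$ unitary, so Theorem \ref{thm:tu} forces it to be a $\Gamma_n$-unitary, while $P|_{\mathcal{H}_2}$ is completely non-unitary by the maximality of $\mathcal{H}_1$, which makes the second piece a completely non-unitary $\Gamma_n$-contraction by definition.

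For the identities, Proposition \ref{lem:3} gives $\Phi_i(\alpha S_1,\dots,\alpha^{n-1}S_{n-1},\alpha^n P)\geq 0$ for every $\alpha\in\overline{\mathbb D}$ and every $i$. Fix $h\in\mathcal{H}_1$ and $\alpha\in\mathbb T$; since $P^*P h = h$, the leading term $n^2(I-|\alpha|^{2n}P^*P)h$ in $\Phi_i(\alpha S_1,\dots,\alpha^n P)h$ vanishes. Writing out $\langle\Phi_i(\alpha S_1,\dots,\alpha^n P)h,h\rangle\geq 0$ and rotating $\alpha\in\mathbb T$ so as to maximize the real part of the off-diagonal term, one obtains
\[
\|S_ih\|^2-\|S_{n-i}h\|^2 \;\geq\; 2n\bigl|\langle(S_i-S_{n-i}^*P)h,h\rangle\bigr|,
\]
and the analogous inequality with $i$ replaced by $n-i$. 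Adding these forces both inner products to vanish and $\|S_ih\|=\|S_{n-i}h\|$, so $\langle\Phi_i(\alpha S_1,\dots,\alpha^n P)h,h\rangle = 0$ for every $\alpha\in\mathbb T$, and positivity of the operator then yields $\Phi_i(\alpha S_1,\dots,\alpha^n P)h=0$. Writing this identity as a sum of vectors proportional to $1$, $\alpha^i$, and $\bar\alpha^i$ and letting $\alpha^i$ sweep through $\mathbb T$, each of the three coefficients must vanish separately, producing $(S_i-S_{n-i}^*P)h=0$ and $(S_i^*-P^*S_{n-i})h=0$. Swapping $i\leftrightarrow n-i$ gives the companion identities $(S_{n-i}-S_i^*P)h=0$ and $(S_{n-i}^*-P^*S_i)h=0$.

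To establish that $\mathcal{H}_1$ reduces $S_i$ I use the Sz.-Nagy--Foias characterization $\mathcal{H}_1=\{v\in\mathcal{H}:\|P^kv\|=\|v\|=\|P^{*k}v\|\text{ for all }k\geq 0\}$ (see \cite{nagy}), reducing the task to verifying $\|P^kS_ih\|=\|S_ih\|=\|P^{*k}S_ih\|$ for every $h\in\mathcal{H}_1$ and every $k\geq 1$. Two key consequences of the identities, both valid for $h\in\mathcal{H}_1$, are $P^*P\,S_ih = S_ih$ and $PP^*\,S_ih = S_ih$. The first follows by applying the identities to $Ph\in\mathcal{H}_1$ together with commutativity: $P^*PS_ih=P^*S_iPh=S_{n-i}^*Ph=S_ih$. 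The second, which is the delicate step, uses the chain
\[
S_ih \;=\; S_i(PP^*h) \;=\; PS_i(P^*h) \;=\; PS_{n-i}^*h,
\]
valid because $PP^*h=h$ on $\mathcal{H}_1$, $S_iP=PS_i$, and the first identity applied to $P^*h\in\mathcal{H}_1$ gives $S_iP^*h=S_{n-i}^*h$; combining with $P^*S_ih=S_{n-i}^*h$ then yields $PP^*S_ih=PS_{n-i}^*h=S_ih$. Iterating these two equalities, together with the commutation $P^*S_{n-i}^*=S_{n-i}^*P^*$ (adjoint of $S_{n-i}P=PS_{n-i}$) and the stability $P^{\pm k}\mathcal{H}_1\subseteq\mathcal{H}_1$, delivers the required norm equalities, so $S_ih\in\mathcal{H}_1$. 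Swapping $i$ and $n-i$ gives $S_{n-i}\mathcal{H}_1\subseteq\mathcal{H}_1$, and then $S_i^*h=P^*S_{n-i}h\in P^*\mathcal{H}_1\subseteq\mathcal{H}_1$ shows $\mathcal{H}_1$ is $S_i^*$-invariant too; hence $\mathcal{H}_1$ reduces each $S_i$ and Theorem \ref{thm:tu} finishes the proof. The main obstacle is the equality $PP^*S_ih=S_ih$, where the interplay of both identities with the unitarity of $P$ on $\mathcal{H}_1$ is essential; once it is in place, the Sz.-Nagy--Foias iteration is routine.
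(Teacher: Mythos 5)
Your proof is correct, but it takes a genuinely different route from the paper's. The paper writes $S_i$ and $P$ as $2\times 2$ block operator matrices with respect to $\mathcal H=\mathcal H_1\oplus\mathcal H_2$, adds the operator inequalities $\Phi_i(\omega S_1,\dots,\omega^n P)\geq 0$ and $\Phi_{n-i}(\beta S_1,\dots,\beta^n P)\geq 0$ to cancel the $S_i^*S_i-S_{n-i}^*S_{n-i}$ terms, extracts $S_{i11}=S_{(n-i)11}^*P_1$ from the $(1,1)$ corner, uses the fact that a positive $2\times2$ operator matrix with vanishing $(1,1)$ corner has vanishing off-diagonal corner (Proposition 1.3.2 of \cite{bhatia}), and then kills the off-diagonal blocks of $S_i$ by combining the resulting relations with the commutativity intertwinings and the complete non-unitarity of $P_2$; it then gets $\Gamma_n$-unitarity on $\mathcal H_1$ via part (3) of Theorem \ref{thm:tu} and Lemma 2.5 of \cite{BSR}. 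You instead work pointwise: evaluating the two quadratic forms at $h\in\mathcal H_1$ (where $n^2(I-P^*P)$ contributes nothing), adding them, upgrading $\langle\Phi_i(\alpha\,\cdot)h,h\rangle=0$ to $\Phi_i(\alpha\,\cdot)h=0$ by positivity, and separating the coefficients of $1,\alpha^i,\bar\alpha^i$ gives the full-space identities $S_ih=S_{n-i}^*Ph$ and $S_i^*h=P^*S_{n-i}h$; invariance of $\mathcal H_1$ then follows from the Sz.-Nagy--Foias description of $\mathcal H_1$, and the simpler implication (2)$\Rightarrow$(1) of Theorem \ref{thm:tu} finishes. This buys you freedom from block-matrix bookkeeping, from Bhatia's lemma, and from any explicit use of the complete non-unitarity of $P|_{\mathcal H_2}$ (it is absorbed into the characterization of $\mathcal H_1$), at the cost of the iteration step you call routine; it is indeed fillable, e.g.\ by noting the consequences $S_iP^*g=S_{n-i}^*g$ and $\|S_{n-i}^*g\|=\|S_ig\|$ for $g\in\mathcal H_1$, whence $P^{*k}S_ih=S_{n-i}^*P^{*(k-1)}h$ and induction give $\|P^{*k}S_ih\|=\|S_ih\|$, while $P^*PS_ig=S_ig$ applied to $g=P^kh$ handles the forward powers.
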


\begin{proof}

First we consider the case when $P$ is a completely non-unitary
contraction. Then obviously $\mathcal H_1=\{0\}$ and if $P$ is a
unitary then $\mathcal H=\mathcal H_1$ and so $\mathcal
H_2=\{0\}$. In such cases the theorem is trivial. So let us
suppose that $P$ is neither a unitary nor a completely non unitary
contraction. With respect to the decomposition $\mathcal
H=\mathcal H_1\oplus \mathcal H_2$, let
\begin{align*}
S_1 & =
\begin{bmatrix}
S_{111} & S_{112}\\
S_{121} & S_{122}
\end{bmatrix}\,,\,
S_2=
\begin{bmatrix}
S_{211} & S_{212}\\
S_{221} & S_{222}
\end{bmatrix}
,\dots,
\begin{bmatrix}
S_{(n-1)11} & S_{(n-1)12}\\
S_{(n-1)21} & S_{(n-1)22}
\end{bmatrix}
\text{ and } \\
P & =
\begin{bmatrix}
P_1&0\\
0&P_2
\end{bmatrix},
\end{align*}
so that $P_1$ is a unitary and $P_2$ is completely non-unitary.
Since $P_2$ is completely non-unitary it follows that if
$h\in\mathcal H$ and
\[
\|P_2^nh\|=\|h\|=\|{P_2^*}^nh\|, \quad n=1,2,\hdots
\]
then $h=0$.\\

For an arbitrary $i$ between $1$ and $n-1$, the commutativity of
$S_i$ and $P$ gives us

\begin{align}
S_{i11}P_1&=P_1S_{i11}    & S_{i12}P_2=P_1S_{i12}\,, \label{eqn:1} \\
S_{i21}P_1&=P_2S_{i21}    & S_{i22}P_2=P_2S_{i22}\,. \label{eqn:2}
\end{align}

Also by the commutativity of $S_{n-i}$ and $P$ we obtain

\begin{align}
S_{(n-i)11}P_1&=P_1S_{(n-i)11}    & S_{(n-i)12}P_2=P_1S_{(n-i)12}\,, \label{eqn:3} \\
S_{(n-i)21}P_1&=P_2S_{(n-i)21}    &
S_{(n-i)22}P_2=P_2S_{(n-i)22}\,. \label{eqn:4}
\end{align}
By Proposition \ref{lem:3}, we have for all $\omega,
\beta\in\mathbb T$,
\begin{align*}
\Phi_i(\omega S_1,\dots,\omega^{n-1} S_{n-1},\omega^n
P) & =n^2(I-P^*P)+(S_i^*S_i-S_{n-i}^*S_{n-i})\\
& \quad \quad -2n\text{
Re }\omega^i(S_i-S_{n-i}^*P)\\
& \geq 0 \,,\\
\Phi_{n-i}(\beta S_1,\dots,\beta^{n-1} S_{n-1}, \beta^n P) &
=n^2(I-P^*P)+(S_{n-i}^*S_{n-i}-S_i^*S_i)\\
& \quad \quad -2n\text{ Re }\beta^{n-i}(S_{n-i}-S_i^*P)\\
& \geq 0 \,.
\end{align*}
Adding $\Phi_i$ and $\Phi_{n-i}$ we get
\[
n(I-P^*P)-\text{Re }\omega^i(S_i-S_{n-i}^*P)-\text{Re }
\beta^{n-i}(S_{n-i}-S_i^*P)\geq 0
\]
that is
\begin{align}\label{eqn:5}
\begin{bmatrix}
0&0\\
0&n(I-P_2^*P_2)
\end{bmatrix}
-& \text{ Re }\omega^i
\begin{bmatrix}
S_{i11}-S_{(n-i)11}^*P_1 & S_{i12}-S_{(n-i)21}^*P_2\\
S_{i21}-S_{(n-i)12}^*P_1&S_{i22}-S_{(n-i)22}^*P_2
\end{bmatrix} \\
-&\text{ Re }\beta^{n-i}
\begin{bmatrix}
S_{(n-i)11}-S_{i11}^*P_1&S_{(n-i)12}-S_{i21}^*P_2\\
S_{(n-i)21}-S_{i12}^*P_1&S_{(n-i)22}-S_{i22}^*P_2
\end{bmatrix}\, \geq 0 \notag
\end{align}
for all $\omega,\beta\in\mathbb T$. Since the matrix in the left
hand side of (\ref{eqn:5}) is self-adjoint, if we write
(\ref{eqn:5}) as

\begin{equation}\label{eqn:6}
\begin{bmatrix}
R&X\\
X^*&Q
\end{bmatrix}
\geq 0\,,
\end{equation}
then

\begin{eqnarray*}\begin{cases}
&(\mbox{i})\; R\,, Q \geq 0 \text{ and } R=-\text{ Re }\omega^i (
S_{i11}-S_{(n-i)11}^*P_1)\\& \quad \quad \quad \quad \quad \quad
\quad \quad \quad \quad -\text{ Re }\beta^{n-i}
(S_{(n-i)11}-S_{i11}^*P_1)\\& (\mbox{ii}) X= -\frac{1}{2} \{
\omega^i (
S_{i12}-S_{(n-i)21}^*P_2)+\bar{\omega^i}(S_{i21}^*-P_1^*S_{(n-i)12})\\&
\quad \quad \quad \quad + \beta^{n-i}
(S_{(n-i)12}-S_{i21}^*P_2)+\bar{\beta^{n-i}}(S_{(n-i)21}^*-P_1^*S_{i12})
\}
\\&(\mbox{iii})\; Q=3(I-P_2^*P_2)-\text{ Re }\omega^i (S_{i22}-S_{(n-i)22}^*P_2)\\
& \quad \quad \quad \quad - \text{ Re }\beta^{n-i}
(S_{(n-i)22}-S_{i22}^*P_2) \,.
\end{cases}
\end{eqnarray*}

Since the left hand side of (\ref{eqn:6}) is a positive
semi-definite matrix for every $\omega$ and $\beta$, if we choose
$\beta^{n-i}=1$ and $\beta^{n-i}=-1$ respectively then
consideration of the $(1,1)$ block of (\ref{eqn:5}) reveals that
\[
\omega^i(S_{i11}-S_{(n-i)11}^*P_1)+\bar{\omega}^i(S_{i11}^*-P_1^*S_{(n-i)11})\leq
0
\]
for all $\omega\in\mathbb T$. Choosing $\omega^i =\pm 1$ we get
\begin{equation}\label{eqn:7}
(S_{i11}-S_{(n-i)11}^*P_1)+(S_{i11}^*-P_1^*S_{(n-i)11})=0
\end{equation}
and choosing $\omega^i =\pm i$ we get
\begin{equation}\label{eqn:8}
(S_{i11}-S_{(n-i)11}^*P_1)-(S_{i11}^*-P_1^*S_{(n-i)11})=0\,.
\end{equation}
Therefore, from (\ref{eqn:7}) and (\ref{eqn:8}) we get
\[
S_{i11}=S_{(n-i)11}^*P_1\,,
\]
where $P_1$ is unitary. Similarly, we can show that
\[
S_{(n-i)11}=S_{i11}^*P_1\,.
\]
Therefore, $R=0$. Since $(S_1,\dots,S_{n-1},P)$ is a
$\Gamma_n$-contraction, $\|S_{n-i}\|\leq 3$ and hence
$\|S_{(n-i)11}\|\leq 3$. Also by Lemma 2.5 of \cite{BSR},
$(\frac{n-1}{n}S_1,\frac{n-2}{n}S_2,\dots,\frac{1}{n}S_{n-1})$ is
a $\Gamma_{n-1}$-contraction and hence
$(\frac{n-1}{n}S_{111},\frac{n-2}{n}S_{211},\dots,
\frac{1}{n}S_{(n-1)11})$ is a $\Gamma_{n-1}$-contraction.
Therefore, by part-(3) of Theorem
\ref{thm:tu}, $(S_{111},\dots,S_{(n-1)11},P_1)$ is a $\Gamma_n$-unitary.\\

Now we apply Proposition 1.3.2 of \cite{bhatia} to the positive
semi-definite matrix in the left hand side of (\ref{eqn:6}). This
Proposition states that if $R,Q \geq 0$ then $\begin{bmatrix} R&X
\\ X^*&Q
\end{bmatrix} \geq 0$ if and only if $X=R^{1/2}KQ^{1/2}$ for
some contraction $K$.\\

\noindent Since $R=0$, we have $X=0$. Therefore,
\begin{align*}
0=\; & \omega^i (
S_{i12}-S_{(n-i)21}^*P_2)+\bar{\omega}^i(S_{i21}^*-P_1^*S_{(n-i)12})\\&
+
\beta^{n-i}(S_{(n-i)12}-S_{i21}^*P_2)+\bar{\beta}^{n-i}(S_{(n-i)21}^*-P_1^*S_{i12})\;,
\end{align*}
for all $\omega,\beta \in\mathbb T$. Choosing $\beta^{n-i} =\pm 1$
we get
\[
\omega^i (
S_{i12}-S_{(n-i)21}^*P_2)+\bar{\omega}^i(S_{i21}^*-P_1^*S_{(n-i)12})=0\;,
\]
for all $\omega \in \mathbb T$. With the choices $\omega^i=1,i$,
this gives
\[
S_{i12}=S_{(n-i)21}^*P_2\,.
\]
Therefore, we also have
\[
S_{i21}^*=P_1^*S_{(n-i)12}\,.
\]
Similarly, we can prove that
\[
S_{(n-i)12}=S_{i21}^*P_2\,,\quad S_{(n-i)21}^*=P_1^*S_{i12}\,.
\]
Thus, we have the following equations
\begin{align}
& S_{i12}=S_{(n-i)21}^*P_2         & S_{i21}^*=P_1^*S_{(n-i)12} \label{eqn:9}\\
& S_{(n-i)12}=S_{i21}^*P_2         &S_{(n-i)21}^*=P_1^*S_{i12}\,.
\label{eqn:10}
\end{align}
Thus from (\ref{eqn:9}), $S_{i21}=S_{(n-i)12}^*P_1$ and together
with the first equation in (\ref{eqn:2}), this implies that
\[
S_{(n-i)12}^*P_1^2=S_{i21}P_1=P_2S_{i21}=P_2S_{(n-i)12}^*P_1
\]
and hence
\begin{equation}\label{eqn:11}
S_{(n-i)12}^*P_1=P_2S_{(n-i)12}^*\,.
\end{equation}
From equations in (\ref{eqn:3}) and (\ref{eqn:11}) we have that
\[
S_{(n-i)12}P_2=P_1S_{(n-i)12}\,, \quad
S_{(n-i)12}{P_2^*}={P_1^*}S_{(n-i)12}.
\]
Thus
\begin{align*}
S_{(n-i)12}P_2{P_2^*} &=P_1S_{(n-i)12}{P_2^*}
=P_1{P_1^*}S_{(n-i)12}
=S_{(n-i)12}\,, \\
S_{(n-i)12}{P_2^*}P_2 &= {P_1^*}S_{(n-i)12}P_2
={P_1^*}P_1S_{(n-i)12}=S_{(n-i)12}\,,
\end{align*}
and so we have
\[
P_2{P_2^*}S_{(n-i)12}^*=S_{(n-i)12}^*={P_2^*}P_2S_{(n-i)12}^*\,.
\]
This shows that $P_2$ is unitary on the range of $S_{(n-i)12}^*$
which can never happen because $P_2$ is completely non-unitary.
Therefore, we must have $S_{(n-i)12}^*=0$ and so $S_{(n-i)12}=0$.
Similarly we can prove that $S_{i12}=0$. Also from (\ref{eqn:9}),
$S_{i21}=0$ and from (\ref{eqn:10}), $S_{(n-i)21}=0$. Thus with
respect to the decomposition $\mathcal H=\mathcal H_1\oplus
\mathcal H_2$
\[
S_i=
\begin{bmatrix}
S_{i11}&0\\
0&S_{i22}
\end{bmatrix}\,, \quad
S_{n-i}=
\begin{bmatrix}
S_{(n-i)11}&0\\
0&S_{(n-i)22}
\end{bmatrix}.
\]
So, $\mathcal H_1$ and $\mathcal H_2$ reduce $S_1$ and $S_2$. Also
$(S_{i22},S_{(n-i)22},P_2)$, being the restriction of the $\mathbb
E$-contraction $(S_1,\dots,S_{n-1},P)$ to the reducing subspace
$\mathcal H_2$, is an $\Gamma_n$-contraction. Since $P_2$ is
completely non-unitary, $(S_{122},\dots,S_{(n-1)22},P_2)$ is a
completely non-unitary $\Gamma_n$-contraction.

\end{proof}

\vspace{0.62cm}

\noindent \textbf{Acknowledgement.} The author is thankful to the
referee for making numerous invaluable comments on the article.
The referee's suggestions helped in refining the paper.

\end{document}